\documentclass[10pt, reqno]{amsart}

\usepackage{amssymb,latexsym,  amsmath}
\usepackage{eucal}

\newcommand{\supp}{\mbox{supp}}

\newtheorem{lemma}{Lemma}
\newtheorem{theorem}{Theorem}

\begin{document}

\title[On zero-divisors in group rings of groups with torsion]
{On zero-divisors in group rings of  groups with torsion}

\author{S.V. Ivanov and Roman Mikhailov}

\address{ Department of Mathematics\\
University of Illinois  \\1409 West Green Street\\ Urbana\\   IL
61801\\ USA} \email{ivanov@math.uiuc.edu}

\address{ Steklov  Mathematical Institute\\
Gubkina 8, Moscow, 119991\\ Russia}  \email{romanvm@mi.ras.ru}

\thanks{The first named author is supported in part by NSF grant
DMS 09-01782.}
\keywords{Burnside groups, free products of groups, group rings, zero-divisors.  }
\subjclass[2010]{Primary 20C07, 20E06, 20F05,  20F50.}

\begin{abstract} Nontrivial pairs of zero-divisors in group rings  are introduced and discussed. A problem on the existence of nontrivial  pairs of zero-divisors in group rings of free Burnside groups of odd exponent $n \gg 1$ is solved in the affirmative. Nontrivial  pairs of zero-divisors are also found in group rings of free products of groups with torsion.
\end{abstract}
\maketitle

\section{Introduction}

Let $G$ be a group and $\mathbb Z[G]$ denote the group ring of $G$ over the integers. If $h \in G$ is an element of finite order $q > 1$ and  $X, Y \in \mathbb Z[G]$, then we have the following equalities in $\mathbb Z[G]$
\begin{align*}
& X (1-h)\cdot (1+h+\dots+h^{q-1}) Y =0 \ , \\
& X (1+h+\dots+h^{q-1})\cdot (1-h)Y =0 \ .
\end{align*}
Hence, $X (1-h)$ and $(1+h+\dots+h^{q-1}) Y$, $X (1+h+\dots+h^{q-1})$ and $ (1-h)Y$ are left and right zero-divisors of  $\mathbb Z[G]$ (unless one of them is 0 itself) which we call { trivial}  pairs of zero-divisors associated with an element $h \in G$ of finite order $q>1$.
Equivalently, $A, B \in \mathbb Z[G]$, with $AB = 0$, $A, B \ne 0$, is a  {\em trivial }  pair of  zero-divisors  in  $\mathbb Z[G]$  if   there are  $X, Y \in \mathbb Z[G]$ and $ h \in G$ of finite order $q >1$  such that either $A = X(1-h)$ and $B = (1+h+\dots+h^{q-1})Y$ or
$A = X(1+h+\dots+h^{q-1})$ and $B = (1-h)Y$.

An element $A \in \mathbb Z[G]$ is called a nontrivial left  (right)  zero-divisor if
$A$ is a left  (right, resp.)  zero-divisor and for every $B \in \mathbb Z[G]$   such that $B \ne 0$, $AB =0$, the pair $A, B$ is not
a trivial  pair of  zero-divisors.

The notorious Kaplansky conjecture on zero-divisors
claims that, for any  torsion-free group $G$, its integral group
ring $\mathbb Z[G]$ (or, more generally, its  group algebra $\mathbb F[G]$ over a field  $\mathbb F$)   contains no zero-divisors.
In this note, we are concerned with a more modest problem on the existence of zero-divisors in group rings of infinite groups with torsion that would be structured essentially different from the above examples of trivial   pairs of zero-divisors.  We remark in passing that  every pair of  zero-divisors in $\mathbb Z[G]$ is trivial whenever $G$ is cyclic (or locally cyclic).

Note that if $G$ is a finite group,  then every nonzero element $X$ in the augmentation ideal of $ \mathbb Z[G]$ is a left (right) zero-divisor, because the linear operator
$L_X : \mathbb Q[G] \to \mathbb Q[G]$, given by multiplication   $Y \to X Y$  ($Y \to Y X$, resp.), has a nontrivial kernel as follows from $\dim L_X(\mathbb Q[G]) < \dim \mathbb Q[G]$. Hence, $2- g_1 - g_2$, where $g_1, g_2 \in G$, is a left (right) zero-divisor of $ \mathbb Z[G]$ unless $g_1= g_2=1$.
On the other hand, the element $2- g_1 - g_2  \in \mathbb Z[G]$ is not a trivial left (right) zero-divisor unless $g_1, g_2$  generate a cyclic subgroup of $G$.  Hence, for a finite group $G$, the group ring $\mathbb Z[G]$ of $G$ contains no nontrivial zero-divisors if and only if $G$ is cyclic.
More generally, if $G$ is a group with a noncyclic finite subgroup $H$, then the element $2- h_1 - h_2  \in \mathbb Z[G]$, where $h_1, h_2 \in H$,  is a  nontrivial zero-divisor of $\mathbb Z[G]$ unless $h_1, h_2$  generate a cyclic subgroup of $H$ (for details see the proof of Theorem~\ref{th2}).

However, if $G$ is an infinite torsion (or periodic) group all of whose finite subgroups are cyclic, then the existence of nontrivial pairs of zero-divisors in  $\mathbb Z[G]$ is not clear. For instance, let $B(m,n)$ be the free Burnside group of rank $m$ and exponent $n$, that is, $B(m,n)$ is the quotient $F_m/F^n_m$ of a free group $F_m$ of rank $m$. It is known \cite{OB, Iv03} that if  $m \ge 2$ and  $n\gg 1$ is odd, then every noncyclic subgroup of  $B(m,n)$ contains a subgroup isomorphic to the free Burnside group $B(\infty,n)$ of countably infinite rank, in particular, every finite subgroup of $B(m,n)$  is cyclic. Note this situation is dramatically different for even $n \gg 1$, see \cite{Iv94}.

In this regard and because of other properties of $B(m,n)$, analogous to properties of absolutely free groups, see \cite{OB}, the first author asked the following question \cite[Problem 11.36d]{KNT}: Suppose $m \ge 2$ and odd $n\gg 1$. Is it true that every pair of zero-divisors in  $\mathbb Z[B(m,n)]$ is trivial, i.e., if $A B =0$ in $\mathbb Z[B(m,n)]$, then $A=X C$,\ $B =DY$, where  $X, Y$, $C, D \in \mathbb Z[B(m,n)]$ such that $C D=0$ and the set $\supp(C)\cup \supp(D)$ is contained in a cyclic subgroup of $B(m,n)$?

In this paper we will give a negative answer to this question by constructing a nontrivial pair of zero-divisors in $ \mathbb Z[B(m,n)]$  as follows.

\begin{theorem}\label{th1}
 Let $B(m,n)$ be  the free Burnside group of rank $m \ge 2$ and odd exponent $n\gg 1$, and $a_1, a_2$ be free  generators of $B(m,n)$. Denote $c := a_1a_2a_1^{-1}a_2^{-1}$ and let
 \begin{align*}
A   :=  (1 +c + \dots +c^{n-1})(1-a_1a_2a_1^{-1}) \,  , \ \
B    :=  (1-a_1)(1 +a_2 + \dots +a_2^{n-1}) \ .
\end{align*}
Then $AB = 0$ in $\mathbb Z[B(m,n)]$ and $A$, $B$  is a nontrivial pair of  zero-divisors in $\mathbb Z[B(m,n)]$.
\end{theorem}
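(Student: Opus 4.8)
The claim has three parts: that $AB=0$, that $A\neq0\neq B$, and --- the heart of the matter --- that the pair is nontrivial. Write $\sigma_c=1+c+\dots+c^{n-1}$, $\sigma_{a_2}=1+a_2+\dots+a_2^{n-1}$, and $w=a_1a_2a_1^{-1}$. Since $c^n=1$ and $a_2^n=1$ in $B(m,n)$, we have $\sigma_c c=\sigma_c$, $\sigma_c(1-c)=0$, and $g\sigma_{a_2}=\sigma_{a_2}$ for all $g\in\langle a_2\rangle$; also $w=ca_2$ and $wa_1=a_1a_2$. The identity $AB=0$ is then a few lines: from $(1-w)(1-a_1)=1-a_1-w+a_1a_2$ and $a_1a_2\sigma_{a_2}=a_1\sigma_{a_2}$ we get $(1-w)(1-a_1)\sigma_{a_2}=(1-w)\sigma_{a_2}$; and $(1-w)\sigma_{a_2}=\sigma_{a_2}-ca_2\sigma_{a_2}=(1-c)\sigma_{a_2}$; hence $AB=\sigma_c(1-w)(1-a_1)\sigma_{a_2}=\sigma_c(1-c)\sigma_{a_2}=0$. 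The same steps rewrite $A=\sigma_c(1-w)=\sigma_c-\sigma_ca_2=\sigma_c(1-a_2)$.

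Now let $\pi\colon B(m,n)\to(\mathbb Z/n)^m$ be the abelianization, sending the free generators to a basis. Then $\pi(c)=0$ while $\pi(a_1),\pi(a_2)$ are independent of order $n$; in particular $a_1,a_2$ have order $n$, $a_2\notin\langle c\rangle$, and $a_1\notin\langle a_2\rangle$. From $A=\sigma_c(1-a_2)$ we read off $\supp(A)=\langle c\rangle\sqcup\langle c\rangle a_2$, with all coefficients on $\langle c\rangle$ equal to a fixed nonzero integer $\delta$ and all those on $\langle c\rangle a_2$ equal to $-\delta$; likewise $\supp(B)=\langle a_2\rangle\sqcup a_1\langle a_2\rangle$, with coefficients $+1$ on $\langle a_2\rangle$ and $-1$ on $a_1\langle a_2\rangle$. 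In particular $A\neq0$ and $B\neq0$.

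For nontriviality I would first use the following elementary observation. If $h$ has finite order $q$ and $\sigma_h=1+h+\dots+h^{q-1}$, then for $C\in\mathbb Z[B(m,n)]$: $C\in\mathbb Z[B(m,n)](1-h)$ iff $C\sigma_h=0$ iff the coefficients of $C$ sum to $0$ on every right coset of $\langle h\rangle$; dually $C\in\mathbb Z[B(m,n)]\sigma_h$ iff $Ch=C$, $C\in(1-h)\mathbb Z[B(m,n)]$ iff $\sigma_hC=0$ iff the coefficients of $C$ sum to $0$ on every left coset of $\langle h\rangle$, and $C\in\sigma_h\mathbb Z[B(m,n)]$ iff $hC=C$ (each nonobvious direction solves a cyclic difference equation coset by coset). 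Consequently, the pair $A,B$ is trivial iff for some $h\neq1$ one has either (i) $A\sigma_h=0$ and $hB=B$, or (ii) $Ah=A$ and $\sigma_hB=0$. Comparing, in each case, the set of group elements carrying the coefficient $+1$ (for $B$) or $\delta$ (for $A$) with its translate, one sees that $hB=B$ forces $h\langle a_2\rangle=\langle a_2\rangle$, i.e.\ $h\in\langle a_2\rangle$, and $Ah=A$ forces $\langle c\rangle h=\langle c\rangle$, i.e.\ $h\in\langle c\rangle$.

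It remains to eliminate (i) and (ii). In case (ii), $h\in\langle c\rangle$ so $\pi(h)=0$; but $\sigma_hB=0$ evaluated on the coset $\langle h\rangle$, which meets $\supp(B)$ at $1$ with coefficient $+1$, forces $\langle h\rangle$ to meet $a_1\langle a_2\rangle$ as well, so $a_1\in\langle h\rangle\langle a_2\rangle$ and hence $\pi(a_1)\in\langle\pi(a_2)\rangle$ --- a contradiction. In case (i), $h=a_2^k$ for some $k$, and $A\sigma_h=0$ evaluated on the coset $\langle h\rangle$ (which meets $\supp(A)$ at $1$ with coefficient $\delta$) forces $a_2\in\langle c\rangle\langle h\rangle$, hence $\pi(a_2)\in\langle\pi(h)\rangle=\langle k\pi(a_2)\rangle$, which forces $\gcd(k,n)=1$; then $\langle h\rangle=\langle a_2\rangle$ and $\sigma_h=\sigma_{a_2}$, so $hB=B$ reads $a_2^ka_1\sigma_{a_2}=a_1\sigma_{a_2}$, i.e.\ $a_1^{-1}a_2^ka_1\in\langle a_2\rangle$; since $a_2^k$ generates $\langle a_2\rangle$, this says $a_1$ normalizes $\langle a_2\rangle$, so $\langle a_2\rangle$ is normal in $\langle a_1,a_2\rangle$ with cyclic quotient and $\langle a_1,a_2\rangle$ is finite --- impossible, as this subgroup is noncyclic (it surjects onto $(\mathbb Z/n)^2$) and hence contains a copy of $B(\infty,n)$. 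Therefore $A,B$ is a nontrivial pair. The step I expect to be the genuine obstacle is this last one: showing $a_1$ does not normalize $\langle a_2\rangle$ in $B(m,n)$, which reduces to the deep fact that $B(m,n)$ is infinite for odd $n\gg1$; the rest is the identity $AB=0$ and routine bookkeeping with supports, cosets and the abelianization.
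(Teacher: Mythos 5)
Your proposal is correct, but it takes a genuinely different route from the paper's. You exploit the simplification $A=\sigma_c(1-a_1a_2a_1^{-1})=\sigma_c(1-a_2)$ (valid since $\sigma_c c=\sigma_c$), and then characterize triviality exactly: $A=X(1-h)$ iff $A\sigma_h=0$, $B=\sigma_hY$ iff $hB=B$, and dually for the other case. This reduces nontriviality to coset bookkeeping that is settled by the abelianization $(\mathbb Z/n)^m$ together with a single deep input, namely that $\langle a_1,a_2\rangle$ cannot be a finite noncyclic subgroup (equivalently, that finite subgroups of $B(m,n)$ are cyclic, i.e. that $B(2,n)$ is infinite), which the paper itself quotes from \cite{OB, Iv03}. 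The paper instead proves a stronger statement (its Lemma~\ref{lem2}): no factorization $A=XC$, $B=DY$ with $C,D\in\mathbb Z[H]$, $H$ cyclic, $CD=0$ is possible. That is the form in which Problem 11.36d is posed in the introduction, and it is not implied by your argument: for composite $n$, a pair of zero-divisors $C,D$ in $\mathbb Z[H]$ need not be one-sided multiples of $1-h$ and $1+h+\dots+h^{q-1}$, so ruling out the special shape does not rule out the general cyclic factorization. To get that stronger conclusion the paper uses Lemma~\ref{lem1} for arbitrary non-invertible $C\in\mathbb Z[H]$ and, crucially, the antinormality of the maximal cyclic subgroups $\langle c\rangle$, $\langle a_1a_2^i\rangle$ of $B(m,n)$ (from \cite{OB}), a considerably heavier input than yours. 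So your argument is more elementary and fully proves the theorem as stated with the paper's definition of a trivial pair, while the paper's argument buys the stronger ``no factorization through any cyclic subgroup'' conclusion needed for the Kourovka problem in its original wording. Two minor remarks: your left/right coset labels in the annihilator criteria are swapped relative to the usual convention ($C\sigma_h=0$ means the coefficients of $C$ sum to zero on each coset $g\langle h\rangle$), but this is harmless since you only evaluate on the coset $\langle h\rangle$ itself; and your verification of $AB=0$ agrees in substance with the paper's direct expansion.
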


It seems of interest to look at other classes of groups with torsion all of whose finite subgroups are cyclic and ask a similar question on the existence of nontrivial pairs of zero-divisors in their group rings.
From this viewpoint,
we  consider  free products of cyclic groups, all of whose finite subgroups are cyclic by the Kurosh subgroup theorem \cite{Kur},  and show the existence of nontrivial pairs of zero-divisors in their group rings. More generally, we will prove the following.

\begin{theorem}\label{th2}
Let a group  $G$ contain a subgroup isomorphic either to a finite noncyclic group or to the free product $C_{q}* C_{r}$, where  $C_n$ denotes a cyclic group of order $n$ (perhaps, $n = \infty$), and   $1< \min(q, r) < \infty$. Then  the integer  group ring  $ \mathbb Z[G]$ of $G$ has a nontrivial pair  of  zero-divisors.
\end{theorem}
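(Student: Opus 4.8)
The plan is to reduce Theorem~\ref{th2} to two separate cases — a finite noncyclic subgroup and a subgroup isomorphic to $C_q * C_r$ — and in each case exhibit an explicit nontrivial pair of zero-divisors supported in the given subgroup. Once such a pair $A, B$ with $AB = 0$ is found inside $\mathbb Z[H]$ for $H \le G$, it remains a pair of zero-divisors in $\mathbb Z[G]$, and nontriviality is inherited: if $A = XC$, $B = DY$ in $\mathbb Z[G]$ with $CD = 0$ and $\supp(C) \cup \supp(D)$ in a cyclic subgroup of $G$, then one argues (by looking at the coset structure of $\supp(A)$ and $\supp(B)$, which lie in $H$) that the cyclic subgroup may be taken inside $H$, contradicting nontriviality in $\mathbb Z[H]$. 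So the real content is to produce the pair in each of the two cases.

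For the finite noncyclic case, as sketched in the Introduction I would take a noncyclic finite subgroup $H$ and two elements $h_1, h_2 \in H$ that do not generate a cyclic subgroup, and consider $A = 2 - h_1 - h_2$. Since $A$ lies in the augmentation ideal of the finite-dimensional semisimple algebra $\mathbb Q[H]$, the multiplication operator $L_A$ is not injective, so there is a nonzero $B_0 \in \mathbb Q[H]$ with $A B_0 = 0$; clearing denominators gives $B \in \mathbb Z[H]$ with $AB = 0$, $B \ne 0$. The nontriviality of this pair is where one must be careful: one has to show $2 - h_1 - h_2$ cannot be written as $X(1 \pm h + \dots)$ or $X(1 - h)$ times something with cyclic support. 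This is a support/coefficient argument — examining which $\mathbb Z[\langle h\rangle]$-submodule structures can produce the three-term element $2 - h_1 - h_2$ — and this is the step I expect to be the main obstacle, since it is the only place where the noncyclicity hypothesis is used in an essential, non-obvious way.

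For the $C_q * C_r$ case, with $1 < q \le r$ (and possibly $r = \infty$), let $x$ generate $C_q$ and $y$ generate $C_r$. Here I would mimic the construction of Theorem~\ref{th1}: the commutator $c = xyx^{-1}y^{-1}$ has infinite order in the free product, so $1 + c + \dots + c^{n-1}$ has no obvious reason to collapse, and I would look for an identity of the shape
\begin{align*}
A &= (\text{something involving } 1 - x \text{ or } 1 + x + \dots + x^{q-1}) , \\
B &= (\text{something involving } 1 - y \text{ or } 1 + y + \dots + y^{r-1}) ,
\end{align*}
with $AB = 0$ forced by the torsion relations $x^q = 1$, $y^r = 1$, arranged so that no common cyclic subgroup contains the supports. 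A natural candidate is to adapt the Burnside pair directly, replacing $a_1 \mapsto x$, $a_2 \mapsto y$, with the exponents $n-1$ replaced by $q-1$ and $r-1$ respectively; one then verifies $AB = 0$ by the same cancellation as in Theorem~\ref{th1} (which only used the torsion of the generators and the word structure, not anything special to Burnside groups), and checks nontriviality by a normal-form argument in the free product: $\supp(A)$ contains elements whose $C_q * C_r$ normal forms begin and end in syllables that no single cyclic subgroup can simultaneously accommodate. The main work here is the nontriviality check, but it is more tractable than in the finite case because the free-product normal form gives a clean combinatorial handle on supports.
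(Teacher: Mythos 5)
There is a genuine gap, and it is in the free--product case. Your plan there is to transplant the pair from Theorem~\ref{th1}, replacing $a_1,a_2$ by the generators $x,y$ of $C_q*C_r$, on the grounds that the cancellation ``only used the torsion of the generators and the word structure.'' That is not so: the verification of $AB=0$ in Theorem~\ref{th1} uses, besides $b^n=1$, that the commutator $c=[a_1,a_2]$ itself has finite order, so that $\bigl(\sum_{i=0}^{n-1}c^i\bigr)c=\sum_{i=0}^{n-1}c^i$. In $C_q*C_r$ the commutator $c=xyx^{-1}y^{-1}$ has infinite order (as you yourself note), and with $d=cy\,$, $dx=xy$ one gets, for any truncation length $N$ and finite $r$,
\begin{equation*}
\Bigl(\sum_{i=0}^{N-1}c^i\Bigr)(1-d)(1-x)\Bigl(\sum_{j=0}^{r-1}y^j\Bigr)=(1-c^{N})\Bigl(\sum_{j=0}^{r-1}y^j\Bigr)\neq 0,
\end{equation*}
since $c^{N}\notin\langle y\rangle$; if $r=\infty$ the proposed $B$ is not even defined. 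The paper explicitly remarks that the Fox--derivative mechanism behind Theorem~\ref{th1} ``does not work'' for free products, and instead uses a different device (Lemma~\ref{lem3}): the element $U=1+(1-a)b\bigl(\sum_{i=1}^{q}a^i\bigr)$ is a unit with inverse $1-(1-a)b\bigl(\sum_{i=1}^{q}a^i\bigr)$, and the pair is $A=(1-a)U$, $B=U^{-1}\bigl(\sum_{i=1}^{q}a^i\bigr)$, combined with the reduction that $\langle a,\,babab\rangle\cong C_q*C_\infty$ unless $q=r=2$. Your proposal contains no substitute for this construction, so the second half of the theorem is not proved.

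Two further points. First, your reduction ``nontriviality in $\mathbb Z[H]$ is inherited by $\mathbb Z[G]$'' is asserted, not proved, and it is not innocuous: a trivialization in $\mathbb Z[G]$ may use an element $h\in G\setminus H$ of finite order and coefficients $X,Y\in\mathbb Z[G]$, and knowing only that $\supp(A),\supp(B)\subset H$ does not let you replace $\langle h\rangle$ by a cyclic subgroup of $H$; the paper avoids this by proving nontriviality directly in $\mathbb Z[G]$ (Lemma~\ref{lem1} and Lemma~\ref{lem3} are stated for $G$). Second, in the finite noncyclic case you correctly choose $2-h_1-h_2$ but leave the nontriviality check as ``the main obstacle''; in the paper it is a short argument: if $2-h_1-h_2=XC$ with $C$ a non-invertible element of $\mathbb Z[\langle h\rangle]$, Lemma~\ref{lem1} forces the three-element support $\{1,h_1,h_2\}$ to be partitioned into blocks of size at least two inside distinct cosets of $\langle h\rangle$, hence all three lie in one coset containing $1$, so $h_1,h_2\in\langle h\rangle$, contradicting noncyclicity of $\langle h_1,h_2\rangle$. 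So the finite case is close to complete modulo that observation, but the free--product case needs a genuinely different construction.
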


On the one hand, in view of Theorems~\ref{th1}--\ref{th2}, one might wonder if there exists a nonlocally cyclic group $G$ with torsion without  nontrivial  pairs of zero-divisors in $ \mathbb Z[G]$, in particular,
whether there is a free Burnside group  $B(m,n)$, where $m, n >1$, with this property.   Note that, for every even $n \ge 2$ and  $m \ge 2$,  the free Burnside group  $B(m,n)$ contains a dihedral subgroup, hence, by Theorem~\ref{th2}, $\mathbb Z[B(m,n)]$  does have  a nontrivial pair  of  zero-divisors.

On the other hand, our construction of nontrivial pairs of zero-divisors in
$ \mathbb Z[C_{q}* C_{r}]$, where $1<q < \infty$, $r \in \{ 2, \infty\}$,  and $C_{q}= \langle a \rangle_{q}$  is generated by $a$, produces nontrivial  pairs of  zero-divisors of the form $A B= 0$, where $A =(1-a) U$, $B = U^{-1}(\sum_{i=1}^q a^i)$ and $U$ is a unit of $ \mathbb Z[C_{q}* C_{r}]$. Thus, our  nontrivial pairs of zero-divisors in $ \mathbb Z[C_{q}* C_{r}]$ are still rather restrictive  and could be named {\em primitive}.

Generalizing the definition of a trivial pair of zero-divisors, we say that
$A , B \in   \mathbb Z[G]$, where $A, B \ne 0$, $AB =0$, is a  {\em primitive} pair of  zero-divisors in $\mathbb Z[G]$ if there exists a unit $U$ of $\mathbb Z[G]$ such that
$A = X U$, $B = U^{-1} Y$, and $X$, $Y$ is a trivial pair of  zero-divisors in $ \mathbb Z[G]$.
One might conjecture that all pairs of  zero-divisors in $\mathbb Z[G]$ are primitive whenever $G$ is a free product of cyclic groups. Results and techniques of Cohn \cite{C1, C2}, see also  \cite{DS, Ger},  on units and zero-divisors in free products of rings could be helpful in investigation of   this conjecture.

\section{Three Lemmas}

\begin{lemma}\label{lem1}
Suppose that $G$ is a group, $ h \in G$, $H = \langle h \rangle$,
$X \in \mathbb Z[G]$, and $C \in \mathbb Z[H]$ is not invertible
in $\mathbb Z[G]$. Then, for every $g \in G$, the left coset $g H$
of $G$ by $H$ is either disjoint from $\supp (XC)$ or $| gH \cap
\supp (XC) | \ge 2$.
\end{lemma}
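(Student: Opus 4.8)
The plan is to reduce the assertion about $\mathbb Z[G]$ to an assertion about the group ring $\mathbb Z[H]$ of the cyclic group $H$ by decomposing $X$ along the left cosets of $H$, and then to check that an element $YC$ with $Y\in\mathbb Z[H]$ cannot be supported on a single group element. The reduction rests only on the fact that $\mathbb Z[G]$ is a free right $\mathbb Z[H]$-module on any transversal of the left cosets of $H$.

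In detail: fix representatives $\{t_j\}$ of the left cosets $t_jH$ of $H$ in $G$ and write $X=\sum_j t_jY_j$ with $Y_j\in\mathbb Z[H]$, only finitely many nonzero. Since $C\in\mathbb Z[H]$ we get $XC=\sum_j t_j(Y_jC)$, and the $j$-th summand is supported inside $t_jH$. As distinct left cosets are disjoint, for the unique $j_0$ with $g\in t_{j_0}H$ we obtain $gH\cap\supp(XC)=t_{j_0}\cdot\supp(Y_{j_0}C)$, so $|gH\cap\supp(XC)|=|\supp(Y_{j_0}C)|$ computed in $\mathbb Z[H]$. Hence the lemma follows once we show that, for every $Y\in\mathbb Z[H]$, the element $YC\in\mathbb Z[H]$ is either $0$ or has at least two terms.

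Suppose, to the contrary, that $YC=m\,h_0$ for some $Y\in\mathbb Z[H]$, $m\in\mathbb Z\setminus\{0\}$, $h_0\in H$. Since $H$ is abelian, $h_0^{-1}\in\mathbb Z[H]$ commutes with $C$, so $(h_0^{-1}Y)C=m$, i.e.\ $C$ left-divides a nonzero integer in $\mathbb Z[H]$. This is exactly the kind of relation that non-invertibility of $C$ should rule out. The sharpest way to see the contradiction is to use that, in the situation where the lemma is applied, $C$ is a zero-divisor --- there is a nonzero $D\in\mathbb Z[G]$ with $CD=0$ --- for then $0=YCD=m\,h_0D$ immediately forces $D=0$. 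I expect this last step to be the only delicate point: one must make sure that the consequence of ``$C$ is not invertible'' that is invoked is strong enough (namely, that $C$ fails to be a non-zero-divisor) so that the scalar $m$ does not block the argument; the coset decomposition in the previous paragraph is entirely routine.
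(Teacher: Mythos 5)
Your coset decomposition is exactly the paper's own reduction: the paper writes $X=\sum_{i=1}^k x_iC_i$ with $C_i\in\mathbb Z[H]$ and the cosets $x_iH$ pairwise distinct, so that $\supp(XC)=\bigcup_i x_i\supp(C_iC)$ is a disjoint union, and then finishes in one line by asserting that non-invertibility of $C$ forces $|\supp(C_iC)|>1$. Where you differ is precisely at that last step, and your hesitation there is well founded: over $\mathbb Z$, an identity $YC=m\,h_0$ with $|m|\ge 2$ does not make $C$ a unit (take $C=2$, $Y=1$), so the deduction from ``not invertible'' alone is not literally valid --- indeed with $C=2$ and $X=1$ the stated conclusion itself fails, since $\supp(XC)=\{1\}$ meets $H$ in exactly one element. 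Your repair, using that $C$ is annihilated on the right by some nonzero $D$, so that $YC=m\,h_0$ gives $0=m\,h_0D$ and hence $D=0$, is correct, and the extra hypothesis is available in every place the paper invokes the lemma: in Lemma~2 one has $CD=0$ with $C,D\ne 0$, and in Lemma~3 and Theorem~2 the relevant $C$ is $1-h$ or $1+h+\cdots+h^{s-1}$ with $h$ of finite order $s>1$, both zero-divisors in $\mathbb Z[H]$. So what you actually prove is the lemma with ``not invertible'' strengthened to ``is a zero-divisor,'' which is all the paper needs; alternatively the scalar obstruction disappears if non-invertibility is read over $\mathbb Q$, since $YC=m\,h_0$ makes $C$ invertible in $\mathbb Q[H]$. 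In short: same reduction as the paper, but a more careful (and in fact necessary) treatment of the key step, at the cost of proving a slight variant of the statement rather than the statement verbatim.
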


\begin{proof}   Denote $X = \sum_{i=1}^k x_i C_i$, where
$C_i  \in \mathbb Z[H]$ and $x_i \in G$, so that $ x_i H \ne x_j
H$ for $i \ne j$. Then $XC = \sum_{i=1}^k x_i C_i C$ and $\supp
(XC) = \bigcup_{i=1}^k x_i\supp (C_iC)$ is a disjoint union. Since
$C$ is not
 invertible in $\mathbb Z[H]$, $| \supp (C_iC)  | > 1$ and the result follows.
 \end{proof}
\medskip

Recall that a subgroup $K$ of a group $G$ is called {\em antinormal} if, for every $g \in G$,  the inequality
$g K g^{-1} \cap K \ne \{ 1\}$ implies that $g \in K$.

\begin{lemma}\label{lem2}
Suppose that $G$ is a group, $a, b \in G$, the elements $b, c: = aba^{-1}b^{-1}$ have order $n > 1$,  $d := aba^{-1}$, the cyclic subgroups $\langle c \rangle$, $ \langle a b^i  \rangle$,
$i =0,1\dots, n-1$, are nontrivial, antinormal, and $d \not\in  \langle c \rangle$, $c^j d \not\in  \langle a b^i \rangle$ for all $i, j \in \{ 0, 1\dots, n-1 \}$.  Then  equalities
\begin{align}\label{XC}
(1 +c + \dots +c^{n-1})(1-d) & = XC \ , \\ \label{DY}
(1-a)(1 +b + \dots +b^{n-1}) & = DY \  ,
\end{align}
where  $X, Y \in \mathbb Z[G]$,   $C, D \in \mathbb Z[H]$, $H$ is
a cyclic subgroup of $G$, and $CD =0$, are impossible.
\end{lemma}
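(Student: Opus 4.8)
The plan is to assume the equalities \eqref{XC} and \eqref{DY} hold with all the stated properties and derive a contradiction. Write $\wht{K}:=\sum_{u\in K}u\in\mathbb Z[G]$ for a finite subgroup $K\le G$. Then $A=\wht{\langle c\rangle}-\wht{\langle c\rangle}d$ and $B=\wht{\langle b\rangle}-a\wht{\langle b\rangle}$, so $\supp(A)=\langle c\rangle\sqcup\langle c\rangle d$ (a disjoint union, since $d\notin\langle c\rangle$) with coefficients $+1$ on $\langle c\rangle$ and $-1$ on $\langle c\rangle d$, and similarly $\supp(B)=\langle b\rangle\sqcup a\langle b\rangle$ with coefficients $+1$ on $\langle b\rangle$ and $-1$ on $a\langle b\rangle$; here $a\notin\langle b\rangle$ (else $a,b$ commute and $c=1$, contrary to $|c|=n>1$) and $b\notin\langle c\rangle$ (else $d=cb\in\langle c\rangle$). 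In particular $A,B\neq 0$, hence $C,D\neq 0$; since $\mathbb Z[H]$ is a domain when $H$ is infinite cyclic, $H$ is finite; and neither $C$ nor $D$ is invertible in $\mathbb Z[G]$ (otherwise the other factor would vanish). The strategy is to use Lemma~\ref{lem1} and the antinormality of $\langle c\rangle$ to pin $C$ down to an almost trivial shape, then clash that shape with the explicit form of $B$.

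First apply Lemma~\ref{lem1} to $A=XC$. Put $\al=|H\cap\langle c\rangle|$ and $\beta=|H\cap d^{-1}\langle c\rangle d|$; a routine coset computation shows that every left coset $gH$ meets $\langle c\rangle$ in $0$ or $\al$ points and meets $\langle c\rangle d$ in $0$ or $\beta$ points. If $\al\ge 2$, then antinormality of $\langle c\rangle$ forces $H\subseteq\langle c\rangle$, and using $d\notin\langle c\rangle$ together with $\langle c\rangle\cap d^{-1}\langle c\rangle d=\{1\}$ one checks that the coset $dH$ meets $\supp(A)$ in the single point $d$, contradicting Lemma~\ref{lem1}. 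If $\beta\ge 2$, then similarly (antinormality of the conjugate subgroup $d^{-1}\langle c\rangle d$) $H\subseteq d^{-1}\langle c\rangle d$, and the coset $H$ meets $\supp(A)$ only at $1$, again a contradiction. Hence $\al=\beta=1$, so each $gH$ meets $\supp(A)$ in at most $2$ points and, by Lemma~\ref{lem1}, in $0$ or exactly $2$. Therefore the cosets meeting $\supp(A)$ are precisely the $n$ pairwise distinct cosets $c^0H,\dots,c^{n-1}H$, each meeting $\langle c\rangle$ in the single point $c^i$ and $\langle c\rangle d$ in a single point, necessarily $c^{i+m}d$ for one fixed $m$; in particular $c^md\in H$ and $c^md\neq 1$ (as $d\notin\langle c\rangle$). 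Matching the $\pm1$ coefficients of $A$ against the decomposition $X=\sum_\ell x_\ell C_\ell$ from the proof of Lemma~\ref{lem1} then produces an element $\wtl C\in\mathbb Z[H]$ with $\wtl C\,C=1-c^md$.

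Now feed $CD=0$ back in: $(1-c^md)D=\wtl C(CD)=0$, so $D$ is fixed by left multiplication by $c^md$, whence $D=\wht{\langle c^md\rangle}\,D'$ for some $D'$. Then $B=DY$ is a left multiple of $\wht{\langle c^md\rangle}$, so $\supp(B)$ is a union of left cosets of $\langle c^md\rangle$ on each of which the coefficient of $B$ is constant; since $1\in\supp(B)$ has coefficient $+1$ and in $B$ the value $+1$ occurs only on $\langle b\rangle$, we get $\langle c^md\rangle\subseteq\langle b\rangle$. As $c^md=c^{m+1}b$ (because $d=cb$), this gives $c^{m+1}\in\langle b\rangle\cap\langle c\rangle$; if $c^{m+1}\neq 1$, antinormality of $\langle c\rangle$ would force $\langle b\rangle\subseteq\langle c\rangle$, contradicting $b\notin\langle c\rangle$. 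Hence $c^{m+1}=1$, so $m=n-1$ and $c^md=c^{n-1}(cb)=b$. Consequently $B=DY$ is a left multiple of $\wht{\langle b\rangle}$, i.e.\ $bB=B$. But $B=(1-a)\wht{\langle b\rangle}$, so $bB=\wht{\langle b\rangle}-ba\wht{\langle b\rangle}$, and $bB=B$ forces $ba\wht{\langle b\rangle}=a\wht{\langle b\rangle}$, hence $ba\langle b\rangle=a\langle b\rangle$ and $a^{-1}ba\in\langle b\rangle$. Then $a^{-1}\langle b\rangle a$ is a subgroup of $\langle b\rangle$ of the same finite order $n$, hence equal to $\langle b\rangle$, so $a$ normalizes $\langle b\rangle$; thus $d=aba^{-1}\in\langle b\rangle$ and $c=db^{-1}\in\langle b\rangle$, and since $c$ and $d$ both have order $n=|\langle b\rangle|$, each generates $\langle b\rangle$, so $d\in\langle b\rangle=\langle c\rangle$ --- contradicting $d\notin\langle c\rangle$. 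This proves the lemma.

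The step I expect to be the crux is the second one: extracting from Lemma~\ref{lem1} the precise factorization $\wtl C\,C=1-c^md$. Lemma~\ref{lem1} only supplies the soft bound ``$\ge 2$'' on coset intersections, so one must complement it with the matching upper bounds ``$\le 1$ on $\langle c\rangle$'' and ``$\le 1$ on $\langle c\rangle d$'' coming from antinormality of $\langle c\rangle$, eliminate the degenerate cases $\al\ge 2$ and $\beta\ge 2$, and only then assemble $\supp(A)$ into the rigid family of two-element blocks $\{c^i,c^{i+m}d\}$ and read off the coefficients; the hypotheses on the subgroups $\langle ab^i\rangle$ are not needed for this particular route.
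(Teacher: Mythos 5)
Your argument is correct, and its second half takes a genuinely different route from the paper's. The treatment of \eqref{XC} is essentially the paper's: Lemma~\ref{lem1} together with antinormality of $\langle c\rangle$ (and of its conjugate $d^{-1}\langle c\rangle d$) shows that no left coset of $H$ contains two elements of $\langle c\rangle$ or two elements of $\langle c\rangle d$, so the coset of $1$ contains exactly one further element $c^{m}d\neq 1$ of $\supp(XC)$, i.e.\ a nontrivial element of this form lies in $H$. At this point the paper switches to \eqref{DY}: it applies a ``right hand'' version of Lemma~\ref{lem1} to produce a nontrivial $ab^{i_2}\in H$, notes that $c^{i_1}d$ and $ab^{i_2}$ commute as powers of $h$, and derives the contradiction from antinormality of $\langle ab^{i_2}\rangle$ and the hypothesis $c^{j}d\notin\langle ab^{i}\rangle$. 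You instead squeeze more out of \eqref{XC}: identifying the $H$-block of $A$ gives $\widetilde{C}\,C=1-c^{m}d$ with $\widetilde{C}\in\mathbb Z[H]$ (this step is sound: the component of $XC$ supported on the coset $H$ is $x_{\ell}C_{\ell}C$ with $x_{\ell}\in H$, and it equals $1-c^{m}d$ by your $\alpha=\beta=1$ analysis), then $CD=0$ yields $(1-c^{m}d)D=0$, hence $c^{m}d\,B=B$, and matching this invariance against the explicit coefficients of $B=(1-a)(1+b+\dots+b^{n-1})$ forces first $c^{m}d=b$, then $a^{-1}ba\in\langle b\rangle$, and finally $d\in\langle b\rangle=\langle c\rangle$, contradicting $d\notin\langle c\rangle$. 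What this buys: you never use antinormality of the subgroups $\langle ab^{i}\rangle$ nor the condition $c^{j}d\notin\langle ab^{i}\rangle$, so you prove a formally stronger lemma and dispense with the right-coset version of Lemma~\ref{lem1}; the cost is finer bookkeeping (finiteness of $H$ via the domain argument for infinite cyclic $H$, the exact factorization of the $H$-block, the coefficient matching). One cosmetic remark: the orbits of left multiplication by $\langle c^{m}d\rangle$ are cosets of the form $\langle c^{m}d\rangle g$ (right cosets in the usual convention), not left cosets, but since only the coset $\langle c^{m}d\rangle$ itself is used, nothing in the argument is affected.
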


\begin{proof}  Arguing on the contrary, assume that  equalities \eqref{XC}--\eqref{DY} hold true. Denote $H =  \langle h \rangle$.
Note that neither $C$ nor $D$ is invertible  in
 $\mathbb Z[H]$, because, otherwise, $CD =0$ would imply  that one of $C, D$ is  $0$ which contradicts  one of \eqref{XC}--\eqref{DY} and the assumptions $a \not\in \langle c \rangle$,   $c \neq 1$.

Hence, Lemma~\ref{lem1} applies to the equality \eqref{XC} and yields that the set
$$
\supp (XC) = \{ 1, c, \dots, c^{n-1},  d, cd, \dots, c^{n-1} d\}
$$
can be partitioned into subsets of cardinality  $> 1$  which are contained in distinct left cosets $g H$, $g \in G$.

Assume that $c^{i_1},  c^{i_2} \in gH$, where  $0 \le i_1 < i_2 \le n-1$. Then $c^{i_1 - i_2} = h^k \neq 1$ and, by antinormality of  $\langle c \rangle$, we have $h = c^i$ for some $i$. Since $d \in \supp (XC)$, it follows from Lemma~\ref{lem1} that $d h^j = d c^{ij} \in \supp (XC) $ with $h^j \neq 1$. Hence,
either $ d c^{ij} = c^{i'}$ or  $ d c^{ij} = c^{i'}a$ with $c^{ij} \ne 1$. In either case, we have a contradiction to  $d \not\in \langle c \rangle$ and  antinormality of $\langle c \rangle$.

Now assume that $c^{i_1}d,  c^{i_2}d \in gH$, where  $0 \le i_1 < i_2 \le n-1$. Then $d^{-1}c^{i_1 - i_2}d = h^k \neq 1$ and, by antinormality of  $\langle c \rangle$, we have $h = d^{-1}c^id$ for some $i$. Since $1 \in \supp (XC)$, it follows from Lemma~\ref{lem1} that $h^j =   d^{-1} c^{ij}d \in \supp (XC) $ with $h^j \ne 1$. Hence,
either $  d^{-1}c^{ij}d = c^{i'} \ne 1$ or  $ d^{-1}c^{ij}  d= c^{i'}d$. In either case, we have a contradiction to antinormality of $\langle c \rangle$ and $d \not\in \langle c \rangle$.

The contradictions obtained above prove that the foregoing partition of the set $\supp (XC) $ consists
of two element subsets so that one element belongs to $ \{ 1, c, \dots, c^{n-1} \}$ and the other belongs to $\{ d, cd, \dots, c^{n-1}d \}$. In particular, it follows from $1 \in  \{ 1, c, \dots, c^{n-1} \}$ that
\begin{equation}\label{h1}
h^{k_1} = c^{i_1 } d \neq 1
\end{equation}
for some $k_1, i_1$.

Applying a ``right hand" version of Lemma~\ref{lem1}, to the equality \eqref{DY}, we analogously
obtain that the set
$$
\supp (DY) = \{ 1, b, \dots, b^{n-1},  a, ab, \dots, ab^{n-1} \}
$$
can be partitioned into subsets of cardinality  $> 1$   which are contained in distinct right cosets $H g$, $g \in G$.

Assume that $b^{i_1},  b^{i_2} \in Hg$, where  $0 \le i_1 < i_2 \le n-1$. Then $b^{i_1 - i_2} = h^k \neq 1$ and, by antinormality of  $\langle b \rangle$, we have $h = b^i$ for some $i$. Since $a \in \supp (DY)$, it follows from the analog of Lemma~\ref{lem1} (in the ``right hand" version) that $ h^j a= b^{ij}a \in \supp (DY) $ with $h^j \ne 1$. Hence,
either $  b^{ij} a  = b^{i'}$ or  $ b^{ij}a = a b^{i'}$. In either case, we have a contradiction to  $c \ne 1$ and antinormality of $\langle b \rangle$.

Now assume that $ab^{i_1},  ab^{i_2} \in Hg$, where  $0 \le i_1 < i_2 \le n-1$. Then $ab^{i_1 - i_2}a^{-1} = h^k \neq 1$ and, by antinormality of  $\langle b \rangle$, we have $h = ab^i a^{-1}$ for some $i$. Since $1 \in \supp (DY)$, it follows from the analog of Lemma~\ref{lem1} that $h^j =  a  b^{ij} a^{-1}\in \supp (DY) $ with $h^j \ne 1$. Hence,
either $  ab^{ij}a^{-1}  = b^{i'} \ne 1$ or  $ ab^{ij}a^{-1} =  ab^{i'}$. In either case, we have a contradiction to antinormality of $\langle b \rangle$ and $ c \ne 1$.

The contradictions obtained above prove that the foregoing partition of the set $\supp (DY) $ consists
of two element subsets so that one element belongs to $ \{ 1, b, \dots, b^{n-1} \}$ and the other belongs to $\{ a, ab, \dots, ab^{n-1} \}$. In particular, it follows from $1 \in \{ 1, b, \dots, b^{n-1} \}$ that
\begin{equation}\label{h2}
h^{k_2} = ab^{i_2 } \neq 1
\end{equation}
for some $k_2, i_2$.

In view of equalities \eqref{h1}--\eqref{h2}, we obtain  $ c^{i_1 } d    ab^{i_2 }  = ab^{i_2 }  c^{i_1 } d  $.  Since the subgroup $\langle ab^{i_2 }  \rangle  $ is antinormal, we conclude that  $ c^{i_1 } d  \in \langle ab^{i_2 }  \rangle  $. This, however, is impossible by assumption and Lemma~\ref{lem2} is proved.
\end{proof}

\begin{lemma}\label{lem3}
Suppose  $a, b \in G$ are elements of a group $G$
such that the subgroup $\langle a, b \rangle$, generated by $a, b$, is isomorphic to  the free product $\langle a \rangle_q * \langle b \rangle_r$,
where $\langle c \rangle_s$ denotes a cyclic group of order $s$ generated by $c$ (perhaps, $s = \infty$), $1 < q < \infty$, $r \in \{ 2, \infty \}$, and $(q, r) = (2, 2)$ if $r =2$.
Then the elements
 \begin{gather}\label{AB} \textstyle{
 A := (1-a)\left( 1+ (1-a) b \left( \sum_{i=1}^q a^i  \right) \right) \ , } \\  \label{AB0} \textstyle{
 B := \left( 1-  (1-a) b \left( \sum_{i=1}^q a^i  \right) \right) \left( \sum_{i=1}^q a^i  \right)
 }
\end{gather}
 satisfy  $AB = 0$ and form a nontrivial pair of zero-divisors in $\mathbb Z[G]$.
\end{lemma}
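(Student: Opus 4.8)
The plan is to verify the algebraic identity $AB=0$ by a direct computation in $\mathbb{Z}[\langle a,b\rangle]$, and then to reduce the nontriviality claim to Lemma~\ref{lem2}. For the identity, write $\sigma := \sum_{i=1}^{q} a^i$ and $t := (1-a)b\sigma$, so that $A = (1-a)(1+t)$ and $B = (1-t)\sigma$. Then $AB = (1-a)(1+t)(1-t)\sigma = (1-a)(1 - t^2)\sigma = (1-a)\sigma - (1-a)t^2\sigma$. Now $(1-a)\sigma = (1-a)\sum_{i=1}^q a^i = a - a^{q+1} = a - a = 0$ since $a^q = 1$. So it remains to see that $(1-a)t^2\sigma = 0$; but $t^2\sigma = (1-a)b\sigma(1-a)b\sigma\cdot\sigma$, and the leftmost factor is again $(1-a)$, so $(1-a)t^2\sigma = (1-a)^2 b\sigma (1-a) b\sigma^2$. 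This is not literally zero, so instead I observe that $\sigma(1-a) = \sigma - \sigma a = \sum_{i=1}^q a^i - \sum_{i=2}^{q+1} a^i = a - a^{q+1} = 0$ as well. Hence $t\cdot t = (1-a)b\sigma\cdot(1-a)b\sigma$ contains the subword $\sigma(1-a) = 0$, so $t^2 = 0$, and therefore $AB = (1-a)(1-t^2)\sigma = (1-a)\sigma = 0$. (In fact $A = (1-a)U$ and $B = U^{-1}\sigma$ with $U = 1+t$, $U^{-1} = 1-t$, since $t^2 = 0$; this exhibits the pair as primitive, as advertised in the introduction.)

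For nontriviality, suppose toward a contradiction that $A,B$ is a trivial pair, so $A = XC$, $B = DY$ with $C,D\in\mathbb{Z}[H]$ for a cyclic $H\le G$, $CD = 0$, $C,D\ne 0$. The strategy is to recognize $A$ and $B$ as (essentially) the left-hand sides of \eqref{XC}--\eqref{DY} in Lemma~\ref{lem2}, after a change of generators, and to check that the hypotheses of that lemma are met inside the free product $P := \langle a\rangle_q * \langle b\rangle_r$. Concretely, I would take the pair of elements in Lemma~\ref{lem2} to be built from suitable $a',b'\in P$ so that $(1+c'+\dots+c'^{n-1})(1-d')$ and $(1-a')(1+b'+\dots+b'^{n-1})$ match $A$ and $B$ up to the units $U,U^{-1}$; since $CD=0$ forces $C,D$ non-invertible in $\mathbb{Z}[H]$, Lemma~\ref{lem1} and its right-hand analogue constrain $\supp(XC)=\supp(A)$ and $\supp(DY)=\supp(B)$ exactly as in the proof of Lemma~\ref{lem2}, and the antinormality of cyclic subgroups in a free product (every cyclic subgroup of a free product of cyclics meets its conjugates trivially unless the conjugating element normalizes it, which for a maximal cyclic subgroup of a free product means it lies in that subgroup) together with the non-degeneracy conditions $d'\notin\langle c'\rangle$, $c'^j d'\notin\langle a'b'^i\rangle$ rule out all cases. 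The main obstacle is precisely arranging the bookkeeping so that the supports of $A$ and $B$ — which after expansion involve reduced words of the form $a^{i}$, $b a^{j}$, $a^i b a^j$, $a^{i_1} b a^{i_2} b a^{i_3}$, etc. — are literally of the shape $\{1,c,\dots,c^{n-1},d,\dots,c^{n-1}d\}$ and $\{1,b,\dots,b^{n-1},a,\dots,ab^{n-1}\}$ required by Lemma~\ref{lem2}; this likely requires treating the cases $r=\infty$ and $(q,r)=(2,2)$ separately, choosing $c'$ to be a long commutator-like element whose powers are all in distinct cosets, and verifying antinormality and the avoidance conditions by the standard normal-form (Kurosh) argument in free products.

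I expect the hardest part to be the verification of the antinormality and coset-separation hypotheses — i.e., checking that the cyclic subgroups $\langle c'\rangle$ and $\langle a'b'^i\rangle$ are antinormal in $P$ and that the elements $d'$, $c'^j d'$ avoid them — since this is where the free-product structure must be used in an essential way and where the small cases ($q=2$, $r=2$) are delicate. Once those hypotheses are in place, Lemma~\ref{lem2} applies verbatim (modulo conjugating by the unit $U$, which does not affect whether a pair of zero-divisors is trivial, since triviality is defined without reference to units and $U$-twisting preserves the hypothesis $CD=0$ only after absorbing $U$ into $X$ and $Y$ — this absorption step also needs a short justification). The conclusion is that equalities \eqref{XC}--\eqref{DY} are impossible, contradicting the assumed triviality of $A,B$; hence $A,B$ is a nontrivial pair of zero-divisors in $\mathbb{Z}[G]$.
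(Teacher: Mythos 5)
Your verification of $AB=0$ is correct and coincides with the paper's: writing $\sigma_a:=\sum_{i=1}^q a^i$ and $t:=(1-a)b\sigma_a$, the identity $\sigma_a(1-a)=0$ gives $t^2=0$, so $1+t$ is a unit with inverse $1-t$ and $AB=(1-a)\sigma_a=0$ (you should still note $A,B\ne 0$, but that is immediate from the supports). The gap is in the second half, which is the actual content of the lemma: what you offer there is a plan to reduce to Lemma~\ref{lem2}, explicitly deferring the hard steps, and that plan is not the paper's route and does not work as described. The paper never invokes Lemma~\ref{lem2} here. It argues directly: the augmentation homomorphism (which sends $A\mapsto 0$ and $B\mapsto q\ne 0$) rules out the configuration \eqref{AB2}, so a trivial pair must satisfy \eqref{AB1} for some torsion element $h\in G$; Lemma~\ref{lem1} applied to $A=X(1-h)$ then forces some $h^{\ell}\ne 1$ to lie in $\supp A$, and a short case analysis ($r=\infty$ via orders of the elements $a^iba^j$, and $(q,r)=(2,2)$ via enumerating the possible coset partitions of the six-element support) yields $a=h^{\ell}$; finally, comparing $A\left(\sum_{i=1}^{s}h^{i}\right)=X(1-h)\left(\sum_{i=1}^{s}h^{i}\right)=0$ with the direct computation $A\left(\sum_{i=1}^{s}h^{i}\right)=q(1-a)b\left(\sum_{i=1}^{s}h^{i}\right)$ gives $b^{-1}ab\in\langle h\rangle$, so $a=h^{\ell}$ commutes with $b^{-1}ab$, impossible in $\langle a\rangle_q*\langle b\rangle_r$. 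No antinormality is needed anywhere.

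Concretely, your proposed reduction faces three obstructions you do not resolve. First, the torsion element $h$ in a trivial factorization ranges over all of $G$, an arbitrary overgroup of the free product, while Lemma~\ref{lem2} requires antinormality of certain cyclic subgroups in the ambient group; this cannot be verified for an arbitrary $G$ containing $C_q*C_r$, which is precisely why the paper's proof of Lemma~\ref{lem3} avoids it. Second, $A$ and $B$ cannot be matched literally with the left-hand sides of \eqref{XC}--\eqref{DY}: those products always have coefficients in $\{-1,0,1\}$, whereas $A=(1-a)+(1-a)^2b\sigma_a$ carries the coefficient $-2$ on each $aba^{j}$, and in $\langle a\rangle_q*\langle b\rangle_\infty$ there is no finite-order element available to play the role of $c$ (the commutator has infinite order). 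Third, matching ``up to the unit $U$'' is incompatible with the definition of triviality: triviality is not preserved under twisting by a unit --- that is exactly why the paper distinguishes primitive pairs from trivial ones, and your own remark that $A=(1-a)U$, $B=U^{-1}\sigma_a$ only shows the pair is primitive, which says nothing about whether it is trivial. (You also start from the weaker hypothesis $A=XC$, $B=DY$ with $CD=0$ rather than from the definition of a trivial pair; that is legitimate only if you actually refute this weaker hypothesis, which the sketch does not do.) To complete the proof, replace the reduction to Lemma~\ref{lem2} by the augmentation-plus-support argument outlined above.
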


\begin{proof}  Since
\begin{equation*}\textstyle{
\left( 1+ (1-a) b \left( \sum_{i=1}^q a^i  \right) \right) \cdot
\left( 1-  (1-a) b \left( \sum_{i=1}^q a^i  \right) \right) = 1 \ ,
}
\end{equation*}
it follows that $AB = (1-a) (\sum_{i=1}^q a^i ) = 0$  and $A, B \ne 0$, hence $A, B$ is a pair of  zero-divisors in $\mathbb Z[G]$.
We need to show that $A, B$ is a nontrivial pair of zero-divisors. Arguing on the contrary, assume that
$A, B$ is a trivial pair of zero-divisors in $\mathbb Z[G]$. Then there is an element  $h \in G$ of finite order $s >1$ and $X, Y \in \mathbb Z[G]$ such that either
\begin{equation}\label{AB1}\textstyle{
 A = X(1-h)   \quad   \mbox{and}  \quad  B = \left( \sum_{i=1}^s h^i  \right) Y }
\end{equation}
or
\begin{equation}\label{AB2}\textstyle{
 A = X\left( \sum_{i=1}^s h^i  \right)   \quad   \mbox{and}  \quad  B =  (1-h) Y  \ . }
\end{equation}

Let $\sigma : \mathbb Z[G] \to  \mathbb Z$ denote the augmentation homomorphism and $H = \langle h \rangle_s$.
It follows from definitions \eqref{AB}--\eqref{AB0} that $\sigma(A) = 0$ and  $\sigma(B) = q$. On the other hand, it follows that if \eqref{AB1} are true then $\sigma(A) = 0$ and if \eqref{AB2} hold then $\sigma(B) = 0$.  Hence, equalities \eqref{AB1} are true.
Looking again at   \eqref{AB}--\eqref{AB0}, we see that
\begin{equation}\label{suppA}
 \supp A  = \{   1, a, a^i b a^j \mid i \in \{ 0, 1\}, j \in \{ 0,1, \dots, q-1 \} \} \ .
\end{equation}
By Lemma~\ref{lem1}, $\supp A$ can be partitioned into subsets of cardinality  $> 1$   which are contained in distinct left cosets $g H$, $g \in G$.
Since $1 \in \supp A$, there is also an element $h^\ell \ne 1$ in $\supp A$.
Now we consider two cases: $r = \infty$ and $(q, r) = (2,2)$.

Suppose $r = \infty$. Since for all $i, j$ elements $a^i b a^j \in \supp A$
have infinite orders, it follows that $a = h^\ell$ for some $\ell$.

Assume   $(q, r) = (2,2)$. Then  \eqref{suppA} turns into
$$
 \supp A  = \{   1, a, b, aba,   ba,   ab  \} \ .
$$
Recall that $\supp A$ can be partitioned into some $k$ subsets $S_1, \dots, S_k$  of cardinality  $> 1$   which are contained in distinct left cosets $ g H$, $g \in G$. Hence, $k \le 3$. Note if $g_1, g_2 \in \{ 1, ba, ab \}$ are distinct, then $g_1^{-1}g_2$ has infinite order in  the free product $\langle a \rangle_2 * \langle b \rangle_2$, whence $g_1^{-1}g_2 \not\in H$ and
$g_1, g_2$ belong to different sets $S_1, \dots, S_k$. Therefore, $k =3$.

 Now we can verify that there is only one partition $\supp A = S_1\cup S_2\cup S_3$ such that $S_1 = \{ 1, g_2 \}$, $S_2 = \{ g_3, g_4 \}$, $S_3 = \{ g_5, g_6 \}$, $1 \in S_1$, $ba \in S_2$, $ab \in S_3$, and elements
 $g_2$, $g_3^{-1}g_4$, $g_5^{-1}g_6$ commute pairwise. This unique partition
 is the following:  $S_1 = \{ 1, a \}$, $S_2 = \{ b, ba \}$, $S_3 = \{ ab, aba \}$. Hence, $a = h^\ell$.

Thus in either case we have proved that  $a = h^\ell$  for some $\ell$. Then $\ell q = s$ and
 \begin{equation}\label{hah}  \textstyle{
\sum_{i=1}^s h^i  =  \left( \sum_{j=0}^{\ell- 1} h^j  \right)  \left( \sum_{k=0}^{q- 1} h^{\ell k}  \right) =  \left( \sum_{j=0}^{\ell- 1} h^j  \right)  \left( \sum_{k=1}^{q} a^{k}  \right) \ . }
\end{equation}
Hence,
$$ \textstyle{
A \left( \sum_{i=1}^s h^i \right)  = X (1-h) \left( \sum_{i=1}^{s} h^i  \right) = 0 \ . }
$$
On the other hand, it follows from  \eqref{hah} that
 \begin{align*}
 A \left( \sum_{i=1}^s h^i \right) & = A \left( \sum_{k=1}^{q} a^{k}  \right) \left( \sum_{j=0}^{\ell- 1} h^j  \right) = (1-a) b \left( \sum_{k=1}^{q} a^{k}  \right)^2 \left( \sum_{j=0}^{\ell- 1} h^j  \right) \\
& = q(1-a) b \left( \sum_{k=1}^{q} a^{k}  \right) \left( \sum_{j=0}^{\ell- 1} h^j  \right) = q(1-a) b   \left( \sum_{i=1}^s h^i \right)  \ .
\end{align*}

Hence,  $(1-a) b   \left( \sum_{i=1}^s h^i \right) = 0$ in $\mathbb Z[G]$ and,
for every product $b h^i$, $i =1, \dots, s$, there is $j$ such that $b h^i = a b h^j$. This equality implies that $b^{-1} a b = h^{i-j}$, hence $a = h^\ell$ commutes with  $b^{-1} a b$ in the free product $\langle a \rangle_q * \langle b \rangle_r$.  This is a contradiction which completes the proof.
 \end{proof}

\section{Proofs of Theorems}

{\em Proof of Theorem~1.}  Let $F_m = \langle b_1, b_2, \dots, b_m \rangle$
be a free group of rank $m$ with free generators $b_1, b_2, \dots, b_m$ and $B(m,n) = F_m/ F_m^n$ be the free $m$-generator Burnside group $B(m,n)$ of exponent $n$, where  $F_m^n $ is the (normal) subgroup generated by all $n$th powers of elements of $F_m$.  Let $a_1, a_2, \dots, a_m$ be  free generators of $B(m,n)$, where  $a_i$ is the image of $b_i$, $i = 1, \dots, m$, under the natural homomorphism $F_m \to B(m,n) = F_m/ F_m^n$.

Note that if $G = \langle g_1, g_2 \rangle$ is generated by elements $g_1, g_2$ and $G$ has exponent $n$, i.e. $G^n = \{ 1\}$, then $G$ is a homomorphic image of $B(m,n)$ if $m \ge 2$. Also, there is a nilpotent
group $G_{2,n} = \langle g_1, g_2 \rangle$ of exponent $n$ and class 2 in which elements $[g_1, g_2 ] := g_1g_2 g_1^{-1} g_2^{-1}$, $g_2$, $g_1g_2^i$,  $i = 0, \dots, n-1$, have order $n$. Therefore, elements $[a_1, a_2 ] := a_1a_2 a_1^{-1} a_2^{-1}$, $a_2$, $a_1a_2^i$,  $i = 0, \dots, n-1$, have order $n$ in $B(m,n)$ if $m \ge 2$. In addition, since $g_1g_2 g_1^{-1} \not\in  \langle  [g_1, g_2 ]  \rangle$ and $[g_1, g_2 ]^j  g_1g_2 g_1^{-1} \not\in  \langle  g_1 g_2^i  \rangle$ in $G_{2,n}$ for all $i,j \in \{ 0, \dots, n-1 \}$, it follows that
$a_1 a_2 a_1^{-1} \not\in  \langle  [a_1, a_2 ]  \rangle$ and $[a_1, a_2 ]^j  a_1a_2 a_1^{-1} \not\in  \langle  a_1 a_2^i  \rangle$  in  $B(m,n)$  for all $i,j \in \{ 0, \dots, n-1 \}$.

Recall that if $n\gg 1$ is odd (e.g. $n > 10^{10}$ as in \cite{Ol82}), then every  maximal cyclic subgroup of $B(m,n)$ is antinormal in
$B(m,n)$  (this is actually shown in the proof of \cite[Theorem 19.4]{OB},  similar arguments can be found in  \cite{Iv92, Iv05}).  Since cyclic subgroups  $\langle  [a_1, a_2 ]  \rangle$, $ \langle  a_1 a_2^i  \rangle$, $i = 0, \dots, n-1$, are of order $n$ and $B(m,n)$  has exponent $n$, it follows that these subgroups  $\langle  [a_1, a_2 ]  \rangle$, $ \langle  a_1 a_2^i  \rangle$, $i = 0, \dots, n-1$, are maximal cyclic and hence are antinormal. Now we can see that all conditions of Lemma~\ref{lem2}  are satisfied for elements $a= a_1$, $b= a_2$, $c = [ a_1, a_2]$, $d = a_1 a_2 a_1^{-1}$ of  $B(m,n)$. Hence, Lemma~\ref{lem2}  applies and yields that equalities \eqref{XC}--\eqref{DY} are impossible. Furthermore, it is easy to see that $(1+c + \dots +c^{n-1})(1-d) \ne 0$ because $c^i d \ne 1$, $i = 0, \dots, n-1$, and $(1-a)(1+b + \dots + b^{n-1}) \ne 0$ because $a b^j  \ne 1$, $j = 0, \dots, n-1$.

Finally, we need to show that
$$(1+c + \dots + c^{n-1})(1-d)(1-a)(1+b + \dots + b^{n-1}) = 0 \ . $$
Note $d = c b$ and $da = ab$, hence,  assuming that $ i_1, j_1, \dots, i_4, j_4$ are arbitrary integers that satisfy
$0\le  i_1, j_1, \dots, i_4, j_4 \le n-1$, we have
\begin{align*}
(1+& c   +\dots +  c^{n-1})(1-d)(1-a)(1+b + \dots + b^{n-1}
) = \\
 & = \left( \sum_{i_1} c^{i_1}  - \sum_{ i_2 } c^{i_2}d
 - \sum_{i_3 } c^{i_3}a  +\sum_{ i_4} c^{i_4}da
 \right) \left(  \sum_{ j_1 } b^{j_1} \right) =\\
  & = \left( \sum_{i_1} c^{i_1}  - \sum_{i_2} c^{i_2}cb
 - \sum_{i_3} c^{i_3}a  + \sum_{i_4} c^{i_4}ab
 \right) \left( \sum_{j_1} b^{j_1} \right) = \\
 & = \sum_{ i_1,j_1} c^{i_1}b^{j_1}  - \sum_{i_2,j_2 } c^{i_2+1}b^{j_2+1}
 - \sum_{i_3,j_3 } c^{i_3}a b^{i_3}   + \sum_{i_4,j_4 } c^{i_4}a b^{j_4+1} = 0  \ .
\end{align*}
Thus  $(1+c + \dots + c^{n-1})(1-d)$ and $(1-a)(1+b + \dots + b^{n-1})$ is a pair of  zero-divisors in  $\mathbb Z[B(m,n)]$ which is not trivial by Lemma~\ref{lem2} and Theorem~\ref{th1} is proved. \qed

\bigskip

The idea of the above construction of a nontrivial pair of  zero-divisors in  $\mathbb Z[B(m,n)]$ could be associated with Fox derivatives (which is somewhat analogous  to \cite{Iv99}, however, no mention of  Fox derivatives is made in \cite{Iv99})   and may be described as follows.  As above, let $F_2=F(b_1, b_2)$ be a free group with free generators
$b_1, b_2$. For $w \in F_2$, consider  Fox derivatives
  $\frac{\partial w}{\partial b_i}\in \mathbb Z[F_2],\ i=1,2$.
Then
\begin{align}\label{FD} \textstyle{
 w-1=\frac{\partial w}{\partial b_1}(b_1-1)+\frac{\partial w}{\partial b_2}(b_2-1)
 }
\end{align}
 in $\mathbb Z[F_2]$. Letting $w := [b_1, b_2]^n$, we observe that
 $\frac{\partial [b_1, b_2]^n}{\partial b_i} = (\sum_{j=0}^{n-1}[b_1, b_2]^j)\frac{\partial [b_1, b_2]}{\partial b_i}$, $i=1,2$. Hence,
 \begin{gather*} \textstyle{
 \frac{\partial [b_1, b_2]^n}{\partial b_1} = (\sum_{j=0}^{n-1}[b_1, b_2]^j)(1  - b_1b_2b_1^{-1}) \ , } \\ {} \qquad
 \textstyle{ \frac{\partial [b_1, b_2]^n}{\partial b_2} = (\sum_{j=0}^{n-1}[b_1, b_2]^j)(b_1  - b_1b_2b_1^{-1}b_2^{-1}) \ . }
\end{gather*}
 Therefore, taking the image of the equality \eqref{FD} in $\mathbb Z[B(2,n)]$, we obtain
\begin{gather*} \textstyle{
 0 = [a_1, a_2]^n -1  =   (\sum_{j=0}^{n-1}[a_1, a_2]^j)(1  - a_1a_2a_1^{-1}) (a_1-1)+} \\  \textstyle{  +   (\sum_{j=0}^{n-1}[a_1, a_2]^j) (a_1  - a_1a_2a_1^{-1}a_2^{-1}) (a_2-1)
 } \ .
\end{gather*}
Now multiplication on the right by $\sum_{i=0}^{n-1} a_2^i$ yields
\begin{gather*} \textstyle{
   (\sum_{j=0}^{n-1}[a_1, a_2]^j) (1  - a_1a_2a_1^{-1}) (a_1-1) (\sum_{i=0}^{n-1} a_2^i) =  0
 } \
\end{gather*}
and this is what we have in Theorem~1.

Analogously, let a group $G = \langle a_1,a_2 \rangle $ be generated by $a_1, a_2$, $a_2^n=1$ in $G$,   a word $w(b_1,b_2)\in F(b_1,b_2)$ have the
property that $w(a_1,a_2)=1$ in $G$, and  $\theta~:~\mathbb Z[F(b_1,b_2)]\to \mathbb Z[G]$, where $\theta (b_i) = a_i$, $i=1,2$,  denote the natural epimorphism. As above, we can obtain
$$\textstyle{
\theta( \frac{\partial w(b_1,b_2)}{\partial
b_1}(b_1-1)(\sum_{j=0}^{n-1}b_2^j)  )=  \theta( \frac{\partial w(b_1,b_2)}{\partial
b_1})(a_1-1)(\sum_{j=0}^{n-1}a_2^j) =    0 \ . }
$$
This equation can be used  for  constructing other potentially nontrivial pairs of zero-divisors in   $\mathbb Z[G]$  (which, however, does not work in case when $G$ is a free product of the form $\langle a_1 \rangle * \langle a_2\rangle$).
\bigskip

{\em Proof of Theorem~2.}
Suppose $G$ is a group and $G$ contains a subgroup $H$ isomorphic either to a finite noncyclic group or to the free product $C_{q}* C_{r}$ of cyclic groups  $C_{q}, C_{r}$, where $1< \min(q, r) < \infty$.

First assume $H$ is a finite noncyclic group. Then there are $h_1, h_2 \in H$ such that the subgroup $\langle h_1, h_2 \rangle$, generated by  $h_1, h_2$, is not cyclic.
Since $2-h_1 - h_2$ is a left (right) zero-divisor in $\mathbb Z[H]$,
$2-h_1 - h_2$ is also a left (right, resp.) zero-divisor in $\mathbb Z[G]$.
If $2-h_1 - h_2$ is a trivial left (right, resp.) zero-divisor  in $\mathbb Z[G]$, then it follows from Lemma~\ref{lem1} that elements  $1, h_1, h_2$ belong to the same coset  $gH_0$  ($H_0g$, resp.), where $H_0= \langle h_0 \rangle$ is cyclic. But then $g \in H_0$ and $ h_1, h_2 \in H_0$, whence
the subgroup  $\langle h_1, h_2 \rangle$ is  cyclic. This contradiction completes the proof in the case when $H$ is finite noncyclic.

Suppose $C_{q}* C_{r}$ is a subgroup of $G$, $1< \min(q, r) < \infty$.
We may assume that $q $ is finite. Denote $C_{q} = \langle a  \rangle_q$ and    $C_{r} = \langle b  \rangle_r$. Note the subgroup $\langle a, babab  \rangle$ of  $C_{q}* C_{r}$ is isomorphic to the free product $C_{q}* C_{\infty}$  unless $q=r= 2$. Therefore, we may assume that $G$ contains a subgroup
isomorphic to $C_{q'}* C_{r'}$, where
$q'=q >1$ is finite and either  $r'= \infty$ or $q'=r'= 2$.  Now Theorem~\ref{th2} follows from Lemma~\ref{lem3}. \qed


\begin{thebibliography}{[75]}

\bibitem[1]{C1}
P. M. Cohn,
{\em   On the free product of associative rings. II. The case of (skew) fields},  Math. Z. {\bf 73}(1960), 433--456.

\bibitem[2]{C2}
P. M. Cohn,
{\em   On the free product of associative rings. III},
J. Algebra {\bf 8}(1968), 376--383.

\bibitem[3]{DS}
Michael A. Dokuchaev and Maria Lucia Sobral Singer,
{\em  Units in group rings of free products
of prime cyclic groups}, Can. J. Math. {\bf  50}(1998), 312--322.


\bibitem[4]{Ger}
V. N. Gerasimov,
{\em   The group of units of a free product of rings},
 {Mat. Sbornik} {\bf 134}(1989), 42--65.

\bibitem[5]{Iv92}   S. V. Ivanov, {\em Strictly verbal
products of groups and a problem of Mal'tcev on operations on
groups},  { Trans. Moscow Math. Soc.} {\bf 54}(1992),
217--249.

\bibitem[6]{Iv94}  S. V. Ivanov,  {\em The free Burnside groups of sufficiently
large exponents}, {Internat. J. Algebra Comp.}  {\bf
4}(1994), 1--308.

\bibitem[7]{Iv99} S. V.  Ivanov,   {\em An asphericity conjecture and Kaplansky
problem on zero divisors}, { J. Algebra}  {\bf 216}(1999),
13--19.

\bibitem[8]{Iv03}  S. V. Ivanov, {\em  On subgroups of free
Burnside groups of large odd exponent}, { Illinois J. Math.}
{\bf 47}(2003), 299--304.


\bibitem[9]{Iv05} S. V. Ivanov,  {\em Embedding free Burnside groups in
finitely presented groups},  {Geom. Dedicata} {\bf
111}(2005), 87--105.


\bibitem[10]{KNT}  {\em Kourovka Notebook: Unsolved problems in group theory}, Novosibirsk, 11th Ed., 1990.

\bibitem[11]{Kur} A. G. Kurosh, {\em The theory of groups},
Chelsea, 1956.

\bibitem[12]{Ol82}
A. Yu. Ol'shanskii, {\em On the Novikov-Adian theorem}, {Mat.
Sbornik} {\bf 118}(1982), 203--235.


\bibitem[13]{OB}
A. Yu. Ol'shanskii,  {\em Geometry of defining relations in
groups}, Nauka, Moscow, 1989; English translation: {\em Math.
and Its Applications, Soviet series},  vol. 70, Kluwer Acad.
Publ., 1991.

\end{thebibliography}
\end{document}